\documentclass{amiart}

\newtheorem{proposition}[theorem]{Proposition}
\definecolor{CORbleu}{RGB}{0,0,0}
\newcommand{\BLU}[1]{\textcolor{CORbleu}{#1}}

\title{Delannoy--Steinhaus triangles over $\mathbb{Z}/2\mathbb{Z}$: weight spectrum, balanced triangles, and extremal values}
\author{ Hac\`ene Belbachir\inst1, Randa Ouchene\inst1}
\institute{
  \inst1 \BLU{USTHB, Faculty of Mathematics, RECITS Laboratory, Algiers, Algeria}\\
  \url{randaaouchene@gmail.com},\url{randa.ouchene1@uqac.ca}, \url{hbelbachir@usthb.dz}\BLU{, \url{hacenebelbachir@gmail.com}}
}

\begin{document}
\maketitle

\begin{abstract}
A Delannoy--Steinhaus triangle is obtained from a finite sequence by a
recurrence governed by the Delannoy numbers. We introduce this construction
over $\mathbb{Z}/2\mathbb{Z}$ and study its weight distribution. The relevant
Delannoy coefficients are all odd, which reduces every entry to the parity of
a consecutive interval of the generating sequence. Encoding these interval
parities by prefix parities yields a weight formula depending only on the
numbers of zeros and ones in the prefix-parity sequence. We use this formula
to determine the complete weight spectrum and the exact multiplicity of each
weight. As a consequence, we characterize and enumerate the balanced
triangles: a balanced triangle generated by a binary sequence of length $n$
exists if and only if $n+1$ is a perfect square. We also determine the
canonical-vector weights, the minimum nonzero weight, the
{second-smallest nonzero weight}, the maximum weight, and the
average weight.

\keywords Delannoy--Steinhaus triangle, prefix parity, weight spectrum, balanced triangle, extremal weight.

\end{abstract}

\section{Introduction}

Delannoy numbers count lattice paths using horizontal, vertical, and diagonal
unit steps. Their combinatorial origin and usefulness in lattice-path
enumeration are surveyed by Banderier and Schwer~\cite{BanderierSchwer2005}.
Generalized Delannoy numbers and their recurrences were investigated by
Edwards and Griffiths~\cite{EdwardsGriffiths2020}, while matrix arrangements
of these numbers exhibit further recurrence and unimodality properties
\cite{AmroucheBelbachirRamirez2019}.

Steinhaus triangles arise from {the local rule of Pascal's
triangle}. Starting with a finite sequence, each successive row is obtained by
adding consecutive terms.
Steinhaus introduced the binary construction and asked for balanced triangles,
that is, triangles containing equally many zeros and ones
\cite{Steinhaus1963}. Harborth gave the first complete constructive solution
of this problem~\cite{Harborth1972}. {Related generalizations in
which each new entry is the sum of a fixed number of consecutive entries were
studied by Bode and Harborth~\cite{BodeHarborth2014}.} Later work imposed additional structure
on the generating sequence, including symmetry
\cite{BrunatMaureso2011}, zero-sum conditions
\cite{EliahouMarinRevuelta2007}, and periodicity
\cite{Chappelon2017}.

The balance question was also extended from the binary field to finite cyclic
groups. Molluzzo formulated the corresponding modular problem through
Steinhaus graphs and triangles~\cite{Molluzzo1978}. Chappelon established
infinite families in finite cyclic groups and constructed a universal
sequence producing balanced Steinhaus figures modulo odd integers
\cite{Chappelon2008,Chappelon2011}. More recently, infinitely many balanced
Steinhaus triangles were obtained for every modulus
\cite{Chappelon2025}. These results concern the multiplicities of all residues,
whereas in the binary setting balance is equivalent to prescribing one
particular weight.

A second line of research studies the weight distribution itself. Brunat and
Maureso determined several smallest and largest attainable weights
\cite{BrunatMaureso2016,BrunatMaureso2017Weights} and developed a method for
computing the weights of triangles generated by canonical basis vectors
\cite{BrunatMaureso2018}. Symmetric binary triangles and their relationship
with parity-regular Steinhaus graphs were subsequently described by
Chappelon~\cite{Chappelon2022}. To the best of our knowledge,
the available results for the ordinary Pascal rule provide selected extremal
values or structured families, but not a single formula describing the
complete weight spectrum.

The present paper combines the Delannoy and Steinhaus directions.
{More precisely, the binomial propagation coefficients occurring
in an ordinary Steinhaus triangle are replaced by the coefficients in a
Delannoy row.} This is not a reformulation of the classical Pascal rule.
Nevertheless, over
$\mathbb{Z}/2\mathbb{Z}$ every relevant Delannoy coefficient is odd, and this
arithmetic fact makes each entry the parity of a consecutive interval of the
initial sequence. We encode all interval parities through prefix parities and
obtain an exact weight formula. It yields the complete spectrum, the number of
generators in each prefix-composition class, the canonical-vector weights,
the two smallest positive weights, the maximum, and the mean. It also gives
a complete solution to the balance problem for this construction: a balanced
triangle generated by a sequence of length $n$ exists precisely when $n+1$
is a perfect square, and in that case all such generators are characterized
and counted explicitly. Thus the
Delannoy recurrence leads to a complete distributional description that
complements the known extremal and balance results for ordinary Steinhaus
triangles.

The paper is organized as follows. Section~2 introduces Delannoy--Steinhaus
triangles over an arbitrary modulus, proves that the Delannoy coefficients are
odd (Lemma~\ref{lem:odd}), and expresses each entry as a Delannoy-weighted
combination of the generating sequence (Proposition~\ref{prop:entry}).
Section~3 specializes to $\mathbb{Z}/2\mathbb{Z}$, introduces the prefix-parity
encoding, and establishes the fundamental weight identity $|D(X)|=z(n+1-z)$
(Proposition~\ref{prop:prefix}). Section~4 uses this identity to determine the
complete weight spectrum together with the multiplicity of each weight
(Theorem~\ref{thm:spectrum}), the existence, characterization, and number of
balanced triangles (Corollary~\ref{cor:balanced}), the weights of the canonical generators
(Corollary~\ref{cor:canonical}), and the {smallest and
second-smallest nonzero weights, together with the largest weight}
(Theorem~\ref{thm:extreme} and Corollary~\ref{cor:minclass}). Section~5 computes
the average weight over all $2^n$ binary generators (Theorem~\ref{thm:mean}),
and Section~6 gathers concluding remarks and directions for further work.
\section{Delannoy--Steinhaus triangles}

Let $D_{u,v}$ be the Delannoy number counting lattice paths from $(0,0)$ to $(u,v)$ with steps $(1,0)$, $(0,1)$, and $(1,1)$. Define the triangular Delannoy array
\[
d_{i,k}=D_{i-k,k},
\qquad 0\leq k\leq i.
\]
It admits the explicit expression
\begin{equation}\label{eq:delannoy}
{
d_{i,k}=\sum_{\ell=0}^{\min(k,i-k)}
\binom{k}{\ell}\binom{i-k}{\ell}2^\ell.}
\end{equation}

\begin{lemma}\label{lem:odd}
For every $i\geq0$ and $0\leq k\leq i$, the coefficient $d_{i,k}$ is odd.
\end{lemma}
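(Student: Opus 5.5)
The plan is to read the parity of $d_{i,k}$ directly off the explicit formula \eqref{eq:delannoy}. Reducing modulo $2$, every summand with $\ell\geq1$ carries the factor $2^\ell$ and therefore vanishes. Only the $\ell=0$ term survives, and it equals $\binom{k}{0}\binom{i-k}{0}2^0=1$. Hence
\[
d_{i,k}\equiv 1 \pmod 2 \qquad\text{for all } 0\leq k\leq i .
\]
This settles the lemma at once, provided the explicit expression is accepted. Note that the range of summation $0\leq \ell\leq \min(k,i-k)$ always contains $\ell=0$, so the surviving term is really present, including in the edge cases $k=0$ and $k=i$.

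Because the paper only asserts \eqref{eq:delannoy} without proof, I would also give a second, self-contained argument from the lattice-path definition. This protects the lemma against any doubt about the formula. The standard recurrence for the last step of a path is
\[
D_{u,v}=D_{u-1,v}+D_{u,v-1}+D_{u-1,v-1}, \qquad u,v\geq1,
\]
with boundary values $D_{u,0}=D_{0,v}=1$, since only one path runs along an axis. I would then induct on $u+v$. The boundary values are odd. In the inductive step $D_{u,v}$ is a sum of three odd numbers, so it is odd. Translating back through $d_{i,k}=D_{i-k,k}$ gives the claim for every $0\leq k\leq i$.

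Neither route poses a real obstacle. The only point needing care is the boundary bookkeeping in the inductive version: the three-term recurrence must be applied only when $u,v\geq1$, with the axis cases handled separately. If I had to justify \eqref{eq:delannoy} itself, I would classify paths by their number $\ell$ of diagonal steps. One then chooses which of the $k$ vertical moves and which of the $i-k$ horizontal moves are merged into diagonals, together with a weight accounting for $2^\ell$. That derivation is more delicate than needed here, which is why the induction is the cleaner primary proof.
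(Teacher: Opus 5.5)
Your first paragraph is exactly the paper's proof. Reduce \eqref{eq:delannoy} modulo $2$: every term with $\ell\geq1$ carries the factor $2^\ell$, and the $\ell=0$ term equals $1$. You also rightly note that $\ell=0$ always lies in the summation range.

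Your second argument is a genuinely different and equally valid route. It inducts on $u+v$, using $D_{u,0}=D_{0,v}=1$ and the last-step recurrence $D_{u,v}=D_{u-1,v}+D_{u,v-1}+D_{u-1,v-1}$ for $u,v\geq1$, so each $D_{u,v}$ is a sum of three odd numbers. The paper's route is a one-liner, but it rests on the closed form \eqref{eq:delannoy}, which the paper states without proof. Your induction uses only the recurrence, which the paper itself invokes in the proof of Proposition~\ref{prop:entry}. It therefore makes the lemma self-contained and never needs the closed form.

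One caution about your aside on justifying \eqref{eq:delannoy}. Classifying Delannoy paths by their number $\ell$ of diagonal steps gives $d_{i,k}=\sum_\ell \frac{(i-\ell)!}{\ell!\,(k-\ell)!\,(i-k-\ell)!}$. Its summands are not $\binom{k}{\ell}\binom{i-k}{\ell}2^\ell$: for $D_{1,1}=3$ this expansion gives $2+1$, whereas \eqref{eq:delannoy} gives $1+2$. In that expansion the parity argument does not reduce to a single surviving odd term. The closed form arises instead by replacing each diagonal step with an east step followed by a north step. Then $\binom{i-k}{\ell}\binom{k}{\ell}$ counts east/north paths with exactly $\ell$ east-then-north corners, and $2^\ell$ records which of those corners came from diagonal steps. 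You do not rely on this aside, so your proof of the lemma is unaffected.
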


\begin{proof}
In the sum in~\eqref{eq:delannoy}, the term corresponding to $\ell=0$ is
\[
\binom{k}{0}\binom{i-k}{0}2^0=1.
\]
Every term with $\ell\geq1$ is divisible by $2$. Therefore
$d_{i,k}\equiv1\pmod2$, which proves that $d_{i,k}$ is odd.
\end{proof}

Let $m\geq2$ and let
\[
X=(a_{0,0},a_{0,1},\ldots,a_{0,n-1})
\in(\mathbb{Z}/m\mathbb{Z})^n.
\]
The Delannoy--Steinhaus triangle $D(X)=(a_{i,j})$ is defined for
$0\leq i\leq n-1$ and $0\leq j\leq n-i-1$ by
\begin{align*}
{a_{1,j}}&{=a_{0,j}+a_{0,j+1},
\qquad 0\leq j\leq n-2,}\\
{a_{i+1,j}}&{=a_{i,j}+a_{i,j+1}+a_{i-1,j+1},
\quad 1\leq i\leq n-2,\quad 0\leq j\leq n-i-2,}
\end{align*}
with all operations performed modulo $m$.

\begin{example}[A binary Delannoy--Steinhaus triangle]\label{ex:first-delannoy}
Work over $\mathbb{Z}/2\mathbb{Z}$ and take
\[
X=(1,0,1,1,0).
\]
The first derived row is obtained by adding adjacent entries. From the
second derived row onward, the three terms in the Delannoy recurrence
are used. For example,
\[
a_{2,0}=a_{1,0}+a_{1,1}+a_{0,1}
=1+1+0=0\pmod2.
\]
All the rows are
\[
\begin{array}{c@{\qquad}ccccc}
i=0&1&0&1&1&0\\
i=1&&1&1&0&1\\
i=2&&&0&0&0\\
i=3&&&&1&0\\
i=4&&&&&1
\end{array}
\]
and the triangle has weight
\[
|D(X)|=3+3+0+1+1=8.
\]
This example will also illustrate below how prefix parities count all
the ones without recomputing the rows.
\end{example}

\begin{proposition}\label{prop:entry}
For every valid pair $(i,j)$,
\begin{equation}\label{eq:entry}
a_{i,j}=\sum_{k=0}^{i}d_{i,k}a_{0,j+k}\pmod m.
\end{equation}
\end{proposition}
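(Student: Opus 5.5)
The plan is to prove \eqref{eq:entry} by strong induction on $i$, using the Delannoy recurrence transported to the triangular array $d_{i,k}$. Throughout I will set $d_{i,k}=0$ whenever $k<0$ or $k>i$ (and $d_{-1,k}=0$). This convention lets all sums run over $k\in\mathbb{Z}$ without boundary bookkeeping.

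\emph{Step 1: a recurrence for $d_{i,k}$.} From the lattice-path definition, $D_{u,v}=D_{u-1,v}+D_{u,v-1}+D_{u-1,v-1}$ for $(u,v)\neq(0,0)$, with $D_{u,v}=0$ when a coordinate is negative. I take the last step of the path. Substituting $u=i+1-k$ and $v=k$ gives
\[
d_{i+1,k}=d_{i,k}+d_{i,k-1}+d_{i-1,k-1},\qquad i\geq1,\ 0\leq k\leq i+1.
\]
I will check the two edge cases against the zero convention. For $k=0$ the identity reads $1=1+0+0$. For $k=i+1$ it reads $1=0+1+0$, since $d_{i-1,i}=0$. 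The same computation with $i=0$ gives $d_{1,k}=d_{0,k}+d_{0,k-1}$, which is consistent with treating the nonexistent row $-1$ as zero.

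\emph{Step 2: base cases.} For $i=0$ the formula is trivial, since $d_{0,0}=1$. For $i=1$ we have $d_{1,0}=d_{1,1}=1$, so \eqref{eq:entry} is exactly the defining rule $a_{1,j}=a_{0,j}+a_{0,j+1}$.

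\emph{Step 3: inductive step.} Assume \eqref{eq:entry} holds for rows $i-1$ and $i$, where $i\geq1$. Insert it into $a_{i+1,j}=a_{i,j}+a_{i,j+1}+a_{i-1,j+1}$ to get
\[
a_{i+1,j}=\sum_k d_{i,k}a_{0,j+k}+\sum_k d_{i,k}a_{0,j+1+k}+\sum_k d_{i-1,k}a_{0,j+1+k}.
\]
In the last two sums I shift $k\mapsto k-1$, so that every term involves $a_{0,j+k}$. Collecting coefficients then gives $d_{i,k}+d_{i,k-1}+d_{i-1,k-1}=d_{i+1,k}$ by Step~1, which is \eqref{eq:entry} for row $i+1$. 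All indices $j+k$ with $0\leq k\leq i+1$ stay within $0,\dots,n-1$, because $j\leq n-i-2$. All identities are integer identities, so they hold modulo $m$.

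The only delicate point is Step~1. There one must make sure the boundary terms are correct under the zero convention. One must also note that the first derived row uses the two-term rule, which is handled separately by the base case $i=1$. Apart from that, the argument is routine reindexing.
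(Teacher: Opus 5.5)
Your proposal is correct and follows the paper's proof essentially verbatim. Both use induction on the row index from rows $i$ and $i-1$, the base cases $i=0,1$, the convention $d_{r,s}=0$ outside $0\leq s\leq r$, a shift of the summation index, and the Delannoy recurrence rewritten as $d_{i+1,k}=d_{i,k}+d_{i,k-1}+d_{i-1,k-1}$; your explicit checks of the boundary cases $k=0$, $k=i+1$ and of the range $j+k\leq n-1$ are more careful than the paper's write-up but do not change the argument.
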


\begin{proof}
We use induction on the row index $i$. For $i=0$, formula~\eqref{eq:entry}
reduces to $a_{0,j}=d_{0,0}a_{0,j}$, and $d_{0,0}=D_{0,0}=1$. For
$i=1$, the defining rule gives
\[
a_{1,j}=a_{0,j}+a_{0,j+1}
=d_{1,0}a_{0,j}+d_{1,1}a_{0,j+1},
\]
because $d_{1,0}=d_{1,1}=1$.

Assume now that~\eqref{eq:entry} holds for rows $i$ and $i-1$, where
$i\geq1$. We adopt the convention that $d_{r,s}=0$ whenever
$s<0$ or $s>r$. Applying the recurrence that defines the triangle and then
the induction hypothesis, we obtain
\begin{align*}
a_{i+1,j}
&=a_{i,j}+a_{i,j+1}+a_{i-1,j+1}\\
&=\sum_{k=0}^{i}d_{i,k}a_{0,j+k}
  +\sum_{k=0}^{i}d_{i,k}a_{0,j+1+k}
  +\sum_{k=0}^{i-1}d_{i-1,k}a_{0,j+1+k}\\
&=\sum_{k=0}^{i+1}
\bigl(d_{i,k}+d_{i,k-1}+d_{i-1,k-1}\bigr)a_{0,j+k}.
\end{align*}
The Delannoy recurrence
\[
D_{u,v}=D_{u-1,v}+D_{u,v-1}+D_{u-1,v-1}
\]
implies, after substituting $d_{i,k}=D_{i-k,k}$, that
\[
d_{i+1,k}=d_{i,k}+d_{i,k-1}+d_{i-1,k-1}.
\]
Thus the last sum equals
$\sum_{k=0}^{i+1}d_{i+1,k}a_{0,j+k}$, which is precisely
formula~\eqref{eq:entry} for row $i+1$. This completes the induction.
\end{proof}

\section{Binary representation by prefix parities}

We now work over $\mathbb{Z}/2\mathbb{Z}$ and write the first row as
$X=(a_0,\ldots,a_{n-1})$. Since each $d_{i,k}$ is odd, equation~\eqref{eq:entry} becomes
\begin{equation}\label{eq:binary-entry}
a_{i,j}=\sum_{k=0}^{i}a_{j+k}\pmod2.
\end{equation}
Thus $a_{i,j}$ is the parity of the interval
$a_j,\ldots,a_{j+i}$.

\begin{definition}
The prefix-parity sequence associated with $X$ is
\[
P=(P_0,P_1,\ldots,P_n),
\qquad
P_0=0,\qquad
P_r=\sum_{h=0}^{r-1}a_h\pmod2
\quad(1\leq r\leq n).
\]
\end{definition}

The map $X\mapsto P$ is a bijection between binary sequences of length $n$
and binary sequences of length $n+1$ whose first term is $0$. Indeed, the
definition of $P$ gives
\[
a_r=P_r+P_{r+1}\pmod2.
\]
Hence $P$ determines every coordinate of $X$. Conversely, starting from any
binary sequence $P=(0,P_1,\ldots,P_n)$ and defining $a_r=P_r+P_{r+1}$,
the resulting prefix parities telescope to the prescribed values of $P$.

\begin{proposition}[Prefix-parity formula]\label{prop:prefix}
For every binary sequence $X$ of length $n$,
\[
|D(X)|
=\#\{(u,v):0\leq u<v\leq n,\ P_u\neq P_v\}.
\]
Consequently, if $P$ contains $z$ zeros and $n+1-z$ ones, then
\begin{equation}\label{eq:weight-z}
|D(X)|=z(n+1-z).
\end{equation}
\end{proposition}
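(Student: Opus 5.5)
The plan is to build a bijection between the cells of the triangle and the pairs $(u,v)$ with $0\le u<v\le n$. We start from equation~\eqref{eq:binary-entry}, which is available because of Lemma~\ref{lem:odd} and Proposition~\ref{prop:entry}. It expresses each entry $a_{i,j}$ as the parity of the interval $a_j,\dots,a_{j+i}$.

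First we rewrite this interval parity using prefix parities. Since $P_r=\sum_{h<r}a_h$, telescoping gives
\[
a_{i,j}=\sum_{h=j}^{j+i}a_h=P_{j+i+1}-P_j=P_{j+i+1}+P_j\pmod 2 .
\]
Hence $a_{i,j}=1$ exactly when $P_j\neq P_{j+i+1}$.

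Next we check that $(i,j)\mapsto(u,v)=(j,\,j+i+1)$ is a bijection. Its domain is the set of valid index pairs, $0\le i\le n-1$ and $0\le j\le n-i-1$. Its target is the set of pairs $0\le u<v\le n$.
\begin{itemize}
\item The map lands in the target: $u=j\ge0$, $v=u+i+1>u$, and $v=j+i+1\le n$ because $j\le n-i-1$.
\item Its inverse is $(u,v)\mapsto(i,j)=(v-u-1,\,u)$. Here $i\ge0$ since $v>u$, and $j=u\le n-i-1$ is equivalent to $v\le n$, so the image is a valid pair.
\end{itemize}
Summing the indicator $[a_{i,j}=1]$ over all cells and transporting along this bijection gives the first claim: $|D(X)|$ equals the number of pairs $u<v$ with $P_u\ne P_v$.

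The second claim is a counting step. An unordered pair $\{u,v\}$ of distinct indices in $\{0,\dots,n\}$ has $P_u\ne P_v$ exactly when one index lies among the $z$ positions where $P$ is $0$ and the other among the $n+1-z$ positions where $P$ is $1$. Each such unordered pair corresponds to exactly one ordered pair with $u<v$. The count is therefore $z(n+1-z)$, which is~\eqref{eq:weight-z}.

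The main care point is the index bookkeeping: we must confirm that the valid range of $(i,j)$ matches the full set of pairs $u<v$ in $\{0,\dots,n\}$, with no pair missed or counted twice. Every other step follows directly from~\eqref{eq:binary-entry}.
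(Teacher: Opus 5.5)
Your proof is correct and follows the paper's argument essentially step for step. It telescopes equation~\eqref{eq:binary-entry} into $P_j+P_{j+i+1}$, uses the bijection $(i,j)\mapsto(j,\,j+i+1)$ with inverse $(u,v)\mapsto(v-u-1,\,u)$, and counts zero--one pairs of prefix positions to get $z(n+1-z)$.
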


\begin{proof}
By equation~\eqref{eq:binary-entry},
\[
a_{i,j}
=\sum_{k=0}^{i}a_{j+k}
=\sum_{r=j}^{j+i}a_r
=P_j+P_{j+i+1}\pmod2,
\]
where the last identity follows because all terms preceding position $j$
occur twice in the sum $P_j+P_{j+i+1}$ and therefore cancel modulo $2$.
Hence $a_{i,j}=1$ exactly when $P_j\neq P_{j+i+1}$. The valid entries $(i,j)$ are in bijection with all pairs $(u,v)$ satisfying $0\leq u<v\leq n$, through $u=j$ and $v=j+i+1$. If $P$ contains $z$ zeros and $n+1-z$ ones, each discordant pair consists of one zero and one one, giving $z(n+1-z)$ pairs.

For completeness, the correspondence between entries and pairs is bijective:
given $(u,v)$ with $u<v$, the inverse map is
\[
j=u,\qquad i=v-u-1.
\]
It satisfies $i\geq0$ and $j+i=v-1\leq n-1$, so $(i,j)$ is a valid
position of the triangle. Finally, to count pairs with unequal prefix
parities, choose one of the $z$ zero positions and one of the
$n+1-z$ one positions. These two positions have a unique increasing order,
so they determine exactly one pair $(u,v)$ with $u<v$. The number of such
pairs is therefore $z(n+1-z)$.
\end{proof}

\begin{remark}
For Example~\ref{ex:first-delannoy}, the prefix-parity sequence is
\[
P=(0,1,1,0,1,1).
\]
It contains two zeros and four ones. Formula~\eqref{eq:weight-z} therefore
gives $|D(X)|=2\cdot4=8$, in agreement with the displayed triangle.
\end{remark}

\section{Weight spectrum and extremal values}

Let
\[
\mathcal{W}_n=\{|D(X)|:X\in(\mathbb{Z}/2\mathbb{Z})^n\}.
\]

\begin{theorem}[Complete spectrum]\label{thm:spectrum}
For every $n\geq1$,
\[
\mathcal{W}_n=\{z(n+1-z):1\leq z\leq n+1\}.
\]
Moreover, the number of generating sequences whose prefix-parity sequence contains exactly $z$ zeros is
\[
\binom{n}{z-1}.
\]
More precisely, for $w\in\mathcal{W}_n$, let
\[
\mathcal{Z}_n(w)
=\{z\in\{1,\ldots,n+1\}:z(n+1-z)=w\}.
\]
Then the number of sequences whose triangle has weight exactly $w$ is
\begin{equation}\label{eq:weight-multiplicity}
\#\{X:|D(X)|=w\}
=\sum_{z\in\mathcal{Z}_n(w)}\binom{n}{z-1}.
\end{equation}
\end{theorem}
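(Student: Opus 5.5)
The plan is to transfer everything through the bijection $X\mapsto P$ and then apply Proposition~\ref{prop:prefix}.

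First, fix $z$ and count the generators whose prefix-parity sequence contains exactly $z$ zeros. By the bijection established before Proposition~\ref{prop:prefix}, generators $X$ of length $n$ correspond exactly to binary words $P=(0,P_1,\ldots,P_n)$. Since $P_0=0$ is forced, $P$ has exactly $z$ zeros precisely when $(P_1,\ldots,P_n)$ has exactly $z-1$ zeros. These free coordinates range over all of $(\mathbb{Z}/2\mathbb{Z})^n$, so the count is $\binom{n}{z-1}$. This also shows that $z$ ranges exactly over $1\le z\le n+1$, and every such value is attained because $\binom{n}{z-1}\ge1$. (Note $z=0$ is impossible, since $P_0=0$.)

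Next, the spectrum follows directly. By \eqref{eq:weight-z}, each generator has weight $z(n+1-z)$ for its own value of $z\in\{1,\ldots,n+1\}$, which gives the inclusion $\subseteq$. For the reverse inclusion, fix any such $z$ and take a generator realizing it, for instance $P=(0,\ldots,0,1,\ldots,1)$ with $z$ zeros. Its weight is $z(n+1-z)$, which gives $\supseteq$.

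Finally, for the multiplicity formula \eqref{eq:weight-multiplicity}, partition the set $\{X:|D(X)|=w\}$ according to the number $z$ of zeros in $P$. By \eqref{eq:weight-z}, $X$ lies in this set if and only if its $z$ belongs to $\mathcal{Z}_n(w)$. The classes for distinct $z$ are disjoint, and the class for $z$ has size $\binom{n}{z-1}$ by the first step, so summing over $z$ gives the formula. It may be worth remarking that $\mathcal{Z}_n(w)$ has the form $\{z,n+1-z\}$, since the quadratic $z(n+1-z)$ is symmetric about $(n+1)/2$, but the proof does not need this.

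There is no real obstacle. The only point requiring care is the off-by-one caused by the forced coordinate $P_0=0$, which is what produces $\binom{n}{z-1}$ rather than $\binom{n+1}{z}$.
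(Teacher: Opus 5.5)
Your proposal is correct and follows essentially the same route as the paper: you count the prefix-parity words $P=(0,P_1,\ldots,P_n)$ with $z-1$ further zeros to get $\binom{n}{z-1}$, read off the spectrum from $|D(X)|=z(n+1-z)$, and sum over the disjoint classes $z\in\mathcal{Z}_n(w)$. Your side remark, which the proof does not use, is slightly off at $w=0$: there $\mathcal{Z}_n(0)=\{n+1\}$, since the partner root $z=0$ is excluded by $P_0=0$.
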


\begin{proof}
Let $X$ be a binary sequence and let $z$ be the number of zeros in its
prefix-parity sequence. Since $P_0=0$, we necessarily have
$1\leq z\leq n+1$. Proposition~\ref{prop:prefix} then shows that every
attainable weight belongs to the displayed set.

Conversely, fix any $z\in\{1,\ldots,n+1\}$. Keep $P_0=0$, choose
$z-1$ of the remaining $n$ positions $P_1,\ldots,P_n$ to contain zero,
and put one in every other position. This constructs a prefix-parity
sequence with exactly $z$ zeros. The bijection between $X$ and $P$ produces
a unique generator $X$, and Proposition~\ref{prop:prefix} gives its weight
as $z(n+1-z)$. Thus every value in the displayed set is attained.

There are exactly $\binom{n}{z-1}$ possible choices of the additional zero
positions, and the bijection shows that no two of them produce the same
generator. This proves the composition-class count. Finally, a generator has
weight $w$ precisely when its number of zero prefix parities belongs to
$\mathcal{Z}_n(w)$. The classes corresponding to different values of $z$ are
disjoint, so summing their cardinalities gives~\eqref{eq:weight-multiplicity}.
\end{proof}

\subsection{Balanced triangles}

A binary triangle is called \emph{balanced} when it contains equally many
zeros and ones.

\begin{corollary}[Balanced triangles]\label{cor:balanced}
Let $n\geq1$. A balanced Delannoy--Steinhaus triangle generated by a binary
sequence of length $n$ exists if and only if $n+1$ is a perfect square.
More precisely, {if $n+1=q^2$, then $D(X)$ is balanced if and only if the
prefix-parity sequence of $X$ contains}
\[
{z=\frac{q^2-q}{2}
\qquad\text{or}\qquad
z=\frac{q^2+q}{2}}
\]
{zeros}. The number of binary sequences of length $n$ generating a balanced
triangle is therefore
\begin{equation}\label{eq:balanced-count}
\#\{X:D(X)\text{ is balanced}\}
=
\begin{cases}
\displaystyle
\binom{n+1}{\frac{n+1-\sqrt{n+1}}{2}},
&\text{if $n+1$ is a perfect square},\\[3mm]
0,
&\text{otherwise}.
\end{cases}
\end{equation}
Equivalently, {when $n+1=q^2$, this number is}
\[
{\binom{q^2}{(q^2-q)/2}.}
\]
\end{corollary}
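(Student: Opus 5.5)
The plan is to reduce balance to a quadratic equation in $z$ via Proposition~\ref{prop:prefix}, and then read off existence and the count from Theorem~\ref{thm:spectrum}.

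\textbf{Reduction to a quadratic.} The triangle $D(X)$ has $1+2+\cdots+n=n(n+1)/2$ entries. It is balanced exactly when $|D(X)|=n(n+1)/4$. By~\eqref{eq:weight-z}, this is the condition $4z(n+1-z)=n(n+1)$. Writing $N=n+1$, I rewrite it as
\[
4z^2-4Nz+N(N-1)=0,
\qquad\text{equivalently}\qquad
(2z-N)^2=N.
\]
This identity is the key step. It shows that an integer solution $z$ forces $N$ to be a perfect square, say $N=q^2$ with $q\geq1$. In that case $2z-N=\pm q$, so $z=(q^2\mp q)/2$. Both values are integers because $q^2-q=q(q-1)$ is even. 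Both also lie in $\{1,\dots,n+1\}$ when $q\geq2$: we have $(q^2-q)/2\geq1$ and $(q^2+q)/2\leq q^2$. For $q=1$ we have $n=0$, which is excluded. Conversely, if $N=q^2$, either value of $z$ is attained by Theorem~\ref{thm:spectrum}, and this proves the existence statement and the characterization. The only point needing care is that balance requires $n(n+1)/4$ to be an integer. This is automatically consistent, since the quadratic argument already handles it: when $N$ is not a square there is no solution at all.

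\textbf{Counting.} By~\eqref{eq:weight-multiplicity}, the number of balanced generators is
\[
\binom{n}{z_1-1}+\binom{n}{z_2-1},
\qquad z_1=\frac{q^2-q}{2},\quad z_2=\frac{q^2+q}{2}.
\]
The two values $z_1\neq z_2$ are distinct since $q\geq1$. Because $z_1+z_2=q^2=n+1$, we get $z_2-1=n-z_1$, so $\binom{n}{z_2-1}=\binom{n}{n-z_1}=\binom{n}{z_1}$. Pascal's rule then gives
\[
\binom{n}{z_1-1}+\binom{n}{z_1}=\binom{n+1}{z_1}=\binom{q^2}{(q^2-q)/2}.
\]
Here $z_1=(n+1-\sqrt{n+1})/2$, which matches~\eqref{eq:balanced-count}. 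In the non-square case the count is $0$, by the first part.

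\textbf{Main obstacle.} There is no real difficulty. The only step needing attention is the symmetry-plus-Pascal combination of the two binomial terms, where one must check that $z_1\geq1$ so that $\binom{n}{z_1-1}$ is a legitimate term.
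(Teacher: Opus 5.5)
Your proposal is correct and follows essentially the same route as the paper: you reduce balance to $(2z-N)^2=N$ with $N=n+1$, read off $z=(q^2\mp q)/2$, and use Theorem~\ref{thm:spectrum} for attainment. You then merge the two binomial counts via $z_+=N-z_-$ and Pascal's rule to get $\binom{q^2}{(q^2-q)/2}$. Your explicit check that both values of $z$ lie in $\{1,\dots,n+1\}$ when $q\geq2$ is a small detail the paper leaves implicit.
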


\begin{proof}
The triangle $D(X)$ has $n(n+1)/2$ entries, so it is balanced exactly when
\[
|D(X)|=\frac{n(n+1)}{4}.
\]
Put $M=n+1$. By Proposition~\ref{prop:prefix}, this condition is equivalent
to
\[
z(M-z)=\frac{M(M-1)}{4},
\]
or, after rearranging,
\begin{equation}\label{eq:balanced-square}
(2z-M)^2=M.
\end{equation}
Thus an integral value of $z$ exists if and only if {$M=q^2$ for some integer
$q\geq2$}. In that case equation~\eqref{eq:balanced-square} gives
\[
{z=\frac{q^2-q}{2}
\qquad\text{or}\qquad
z=\frac{q^2+q}{2}.}
\]
Both values are integers, and Theorem~\ref{thm:spectrum} shows that both are
attained.

Let {$z_-=(q^2-q)/2$ and $z_+=(q^2+q)/2$}. Since $z_+=M-z_-$, the number of
corresponding generators is, again by Theorem~\ref{thm:spectrum},
\begin{align*}
\binom{M-1}{z_--1}+\binom{M-1}{z_+-1}
&=\binom{M-1}{z_--1}+\binom{M-1}{z_-}\\
&=\binom{M}{z_-},
\end{align*}
which is the stated formula.
\end{proof}

\subsection{Canonical generators}

Let $e_k^{(n)}$ be the sequence with a single $1$ in position $k$.

\begin{corollary}\label{cor:canonical}
For $0\leq k\leq n-1$,
\[
|D(e_k^{(n)})|=(k+1)(n-k).
\]
\end{corollary}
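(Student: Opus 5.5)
The plan is to read off the prefix-parity sequence of $e_k^{(n)}$ directly and then apply Proposition~\ref{prop:prefix}. The generator has $a_k=1$ and $a_h=0$ for $h\neq k$. By the definition $P_r=\sum_{h=0}^{r-1}a_h \pmod 2$, we get $P_r=0$ whenever $r\leq k$, since the sum then involves only indices $h\leq k-1$. For $r\geq k+1$ the sum contains $a_k$ exactly once, so $P_r=1$. Thus
\[
P=(\underbrace{0,\ldots,0}_{k+1},\underbrace{1,\ldots,1}_{n-k}).
\]

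This sequence has $z=k+1$ zeros, namely at positions $0,\ldots,k$. It has $n+1-z=n-k$ ones, at positions $k+1,\ldots,n$.

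Substituting $z=k+1$ into formula~\eqref{eq:weight-z} gives $|D(e_k^{(n)})|=(k+1)(n-k)$, which is the claim.

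There is no genuine obstacle here. The only point needing care is the index convention: $P$ has length $n+1$ and starts at $P_0=0$, and $P_r$ sums $a_0,\ldots,a_{r-1}$. That convention puts exactly $k+1$ zeros in $P$, not $k$. As a sanity check, $k=0$ and $k=n-1$ both give weight $n$. For $k=0$ this matches the direct picture: by \eqref{eq:binary-entry}, exactly the entries $a_{i,0}$ are $1$, one in each of the $n$ rows.
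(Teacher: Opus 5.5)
Your proposal is correct and follows essentially the same route as the paper: you read off $P=(0^{k+1},1^{n-k})$ for $e_k^{(n)}$ and substitute $z=k+1$ into the prefix-parity weight formula $|D(X)|=z(n+1-z)$. Your extra sanity check for $k=0$ via the interval-parity formula is a nice consistency check but not needed.
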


\begin{proof}
For $X=e_k^{(n)}$, no $1$ has been encountered before position $k+1$.
Thus
\[
P_r=0\quad(0\leq r\leq k),
\qquad
P_r=1\quad(k+1\leq r\leq n).
\]
The prefix-parity sequence therefore contains $k+1$ zeros and $n-k$ ones.
Substituting $z=k+1$ into~\eqref{eq:weight-z} yields
$|D(e_k^{(n)})|=(k+1)(n-k)$.
\end{proof}

\subsection{Smallest and largest weights}

\begin{theorem}\label{thm:extreme}
For $n\geq3$, the smallest nonzero weight is
\[
w_1=n,
\]
and the {second-smallest nonzero weight} is
\[
w_2=2(n-1).
\]
The maximum weight is
\[
\max_X|D(X)|=\left\lfloor\frac{(n+1)^2}{4}\right\rfloor.
\]
\end{theorem}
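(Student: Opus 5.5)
By Theorem~\ref{thm:spectrum}, the problem reduces to studying the function
\[
f(z)=z(n+1-z),\qquad 1\leq z\leq n+1,
\]
on integers. Write $M=n+1$. We have $f(M-z)=f(z)$, and $f$ is a concave quadratic with vertex at $z=M/2$. So along the integers $1,\dots,n+1$, $f$ strictly increases up to the middle and strictly decreases after it. The value $f(n+1)=0$ gives the zero weight, attained by $X=0$, and it is the only zero value in the range. Every nonzero weight is therefore $f(z)$ for some $1\leq z\leq n$. By the symmetry $z\leftrightarrow M-z$, it suffices to consider $1\leq z\leq \lceil n/2\rceil$, where $f$ is increasing.

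For the two smallest nonzero weights, we list the values in increasing order over $z=1,2,\dots$ (equivalently $z=n,n-1,\dots$). The smallest is $f(1)=f(n)=n$. The next distinct value is $f(2)=f(n-1)=2(n-1)$. This requires three checks:
\begin{itemize}
\item $f(2)>f(1)$ holds exactly when $2(n-1)>n$, that is, $n>2$. This is where the hypothesis $n\geq3$ enters.
\item $2$ and $n-1$ must lie in the range $1\leq z\leq n$ with $f(2)\neq f(1)$. For $n\geq3$ both $2$ and $n-1$ are at least $2$, so this holds.
\item No other value lies strictly between $n$ and $2(n-1)$. Every other admissible $z$ satisfies $2\leq z\leq n-1$, hence $\min(z,M-z)\geq2$, and monotonicity on each half gives $f(z)\geq f(2)$.
\end{itemize}
This also covers the degenerate case $n=3$, where $z=2=n-1$ and the second weight $f(2)=4$ is realized by only one value of $z$.

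For the maximum, the integer maximizer of $f$ is the integer nearest to $M/2$, namely $z=\lfloor M/2\rfloor$. This gives
\[
f\bigl(\lfloor M/2\rfloor\bigr)=\lfloor M/2\rfloor\,\lceil M/2\rceil=\lfloor M^2/4\rfloor.
\]
The last equality is a two-case parity check on $M$. Attainability follows because Theorem~\ref{thm:spectrum} shows every $z$ in $\{1,\dots,n+1\}$ occurs.

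The only point needing care is the second-smallest weight. There the claim is about distinct values of $f$, not about the sequence $f(1),f(2),\dots$, so we must verify that the symmetry $f(z)=f(M-z)$ produces no value strictly between $n$ and $2(n-1)$. The monotonicity-on-each-half argument above handles this.
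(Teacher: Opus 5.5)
Your proposal is correct and follows essentially the same route as the paper: reduce to $f(z)=z(n+1-z)$ on $1\le z\le n$, use the symmetry $f(n+1-z)=f(z)$ and the monotonicity toward the middle (which the paper derives from $f(z+1)-f(z)=n-2z$) to obtain $f(1)=n$ and $f(2)=2(n-1)$, and place the maximum at the integer(s) nearest $(n+1)/2$. Your explicit check that no value of $f$ lies strictly between $n$ and $2(n-1)$, and your identity $\lfloor M/2\rfloor\lceil M/2\rceil=\lfloor M^2/4\rfloor$, only spell out what the paper obtains directly from monotonicity and from completing the square.
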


\begin{proof}
Put $f(z)=z(n+1-z)$. By equation~\eqref{eq:weight-z}, the nonzero
weights are exactly the values $f(z)$ with $1\leq z\leq n$; the remaining
case $z=n+1$ gives the zero triangle. The function is symmetric because
\[
f(n+1-z)=f(z).
\]
Moreover,
\[
f(z+1)-f(z)=n-2z.
\]
Consequently, $f$ is strictly increasing as $z$ moves from $1$ toward the
middle of the interval. In particular, for $n\geq3$, the
{two smallest positive values} are
\[
f(1)=f(n)=n
\quad\text{and}\quad
f(2)=f(n-1)=2(n-1).
\]

To maximize $f(z)$, write
\[
f(z)=\frac{(n+1)^2}{4}
-\left(z-\frac{n+1}{2}\right)^2.
\]
The maximum over integral $z$ is therefore attained at the integer or the two
integers closest to $(n+1)/2$. If $n+1$ is even, the maximum is
$(n+1)^2/4$; if $n+1$ is odd, it is $((n+1)^2-1)/4$. In both cases this is
$\lfloor (n+1)^2/4\rfloor$.
\end{proof}

\begin{corollary}\label{cor:minclass}
The weight $n$ is attained exactly by
\[
\{e_0^{(n)},e_{n-1}^{(n)}\}
\cup
\{(0^k,1,1,0^{n-k-2}):0\leq k\leq n-2\}.
\]
The maximum is attained, in particular, by
$e_{\lfloor(n-1)/2\rfloor}^{(n)}$.
\end{corollary}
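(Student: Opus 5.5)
The plan is to read off both claims from the weight formula $|D(X)|=z(n+1-z)$ of Proposition~\ref{prop:prefix}, together with the bijection $X\leftrightarrow P$ given by $a_r=P_r+P_{r+1}$.

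\textbf{Minimum-weight class.} First solve $f(z)=z(n+1-z)=n$ over integers $1\le z\le n+1$. Rewriting gives $z^2-(n+1)z+n=(z-1)(z-n)=0$, so $z\in\{1,n\}$. Hence a generator has weight $n$ exactly when its prefix-parity sequence has one zero or exactly one one. I will then list the corresponding prefix sequences and translate each back through the bijection.

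\emph{Case $z=1$.} Here $P=(0,1,1,\ldots,1)$. Then $a_0=P_0+P_1=1$ and $a_r=0$ for $r\ge1$, so $X=e_0^{(n)}$.

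\emph{Case $z=n$.} Here $P$ has a single $1$, at some position $r$ with $1\le r\le n$; the position cannot be $0$ because $P_0=0$. From $a_s=P_s+P_{s+1}$, the nonzero coordinates of $X$ are $a_{r-1}=1$ and, when $r\le n-1$, also $a_r=1$. So $r=n$ gives $e_{n-1}^{(n)}$. A position $1\le r\le n-1$ gives $(0^{k},1,1,0^{n-k-2})$ with $k=r-1\in\{0,\ldots,n-2\}$.

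Since $X\mapsto P$ is a bijection, these generators are exactly all generators of weight $n$. The degenerate small cases only make sets coincide and cause no contradiction. For $n=1$, $z=1=n$ and $e_0=e_{n-1}$, while the two-ones family is empty. For $n=2$, both values of $z$ still occur.

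\textbf{Maximum.} Take $k=\lfloor(n-1)/2\rfloor$ and apply Corollary~\ref{cor:canonical}. This gives the weight $(k+1)(n-k)$ with $k+1=\lfloor(n+1)/2\rfloor$ and $n-k=\lceil(n+1)/2\rceil$. It remains to check that $\lfloor(n+1)/2\rfloor\lceil(n+1)/2\rceil=\lfloor(n+1)^2/4\rfloor$, which I will verify separately for $n+1$ even and $n+1$ odd. By Theorem~\ref{thm:extreme}, this value is the maximum weight.

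The only delicate point is the exhaustiveness in the case $z=n$, namely making sure the position $r$ of the single one ranges over all of $1,\ldots,n$ and that the boundary value $r=n$ is handled separately. Everything else is direct substitution.
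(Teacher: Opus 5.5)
Your proposal is correct and follows the paper's proof essentially step for step. You reduce to $z\in\{1,n\}$, invert the prefix-parity bijection in both cases (splitting $z=n$ by whether the lone one sits at position $n$ or at some $1\le r\le n-1$), and get the maximum from Corollary~\ref{cor:canonical} with $k=\lfloor(n-1)/2\rfloor$; the only minor difference is that you obtain $z\in\{1,n\}$ from the factorization $(z-1)(z-n)=0$ rather than from the monotonicity argument in the proof of Theorem~\ref{thm:extreme}, which is slightly more self-contained and does not need $n\ge3$.
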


\begin{proof}
By the proof of Theorem~\ref{thm:extreme}, weight $n$ occurs exactly when
$z=1$ or $z=n$.

If $z=1$, then $P_0$ is the unique zero and
\[
P=(0,1,1,\ldots,1).
\]
Using $a_r=P_r+P_{r+1}$ gives $X=e_0^{(n)}$.

If $z=n$, the prefix-parity sequence contains a unique one, necessarily at
some position $j\in\{1,\ldots,n\}$. When $j=n$, taking successive differences
gives $X=e_{n-1}^{(n)}$. When $1\leq j\leq n-1$, the only changes in $P$
occur between positions $j-1$ and $j$, and between positions $j$ and $j+1$.
Hence
\[
a_{j-1}=a_j=1
\quad\text{and}\quad
a_r=0\ \text{for }r\notin\{j-1,j\}.
\]
Writing $k=j-1$ gives the family
$(0^k,1,1,0^{n-k-2})$, with $0\leq k\leq n-2$. These cases are exhaustive
and mutually distinct.

Finally, let $k=\lfloor(n-1)/2\rfloor$. Then the two factors in the canonical
weight $(k+1)(n-k)$ differ by at most one. Corollary~\ref{cor:canonical} and
the maximum calculation in Theorem~\ref{thm:extreme} therefore give
\[
|D(e_k^{(n)})|=\left\lfloor\frac{(n+1)^2}{4}\right\rfloor.
\]
\end{proof}

\section{Average weight}

\begin{theorem}\label{thm:mean}
The average weight over all $2^n$ binary sequences of length $n$ is
\[
\frac{1}{2}\binom{n+1}{2}.
\]
\end{theorem}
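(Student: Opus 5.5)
The plan is to use linearity of expectation over the pairs $(u,v)$ in the first statement of Proposition~\ref{prop:prefix}, rather than summing $z(n+1-z)\binom{n}{z-1}$ directly. Take $X$ uniformly random among the $2^n$ binary sequences. By the bijection $X\mapsto P$, the vector $(P_1,\ldots,P_n)$ is then uniform on $\{0,1\}^n$, with $P_0=0$ fixed. Equivalently, $P_1,\ldots,P_n$ are independent fair bits.

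Proposition~\ref{prop:prefix} expresses the weight as a sum of indicators,
\[
|D(X)|=\sum_{0\le u<v\le n}\mathbf{1}[P_u\neq P_v],
\]
so the average weight equals $\sum_{u<v}\Pr(P_u\neq P_v)$. For each pair with $u<v$ we have $v\ge1$, so $P_v$ is a fair bit independent of $P_u$; this holds whether $P_u$ is the constant $P_0$ or another independent fair bit. Hence every probability equals $1/2$. Summing over the $\binom{n+1}{2}$ pairs gives $\frac12\binom{n+1}{2}$.

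As a cross-check, I would confirm the same value from Theorem~\ref{thm:spectrum}. Substitute $t=z-1$ to get
\[
2^{-n}\sum_{t=0}^{n}\binom{n}{t}(t+1)(n-t).
\]
Then evaluate using $\sum_t\binom{n}{t}(n-t)=n2^{n-1}$ and $\sum_t\binom{n}{t}t(n-t)=n(n-1)2^{n-2}$. This gives
\[
\frac n2+\frac{n(n-1)}4=\frac{n(n+1)}4,
\]
in agreement with the first computation.

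The only point that needs care is the independence claim for pairs involving $u=0$, where $P_0$ is deterministic. It is handled by noting that $P_v$ alone is uniform, which is all the argument uses. I expect no serious obstacle beyond this.
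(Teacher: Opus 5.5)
Your proof is correct and follows the paper's argument: linearity of expectation over the indicators $\mathbf{1}[P_u\neq P_v]$ from Proposition~\ref{prop:prefix}, with each pair discordant with probability $1/2$. The difference is cosmetic. You obtain the probability $1/2$ from the uniformity of $(P_1,\ldots,P_n)$ under the bijection, whereas the paper flips $a_u$ in the interval sum $\sum_{r=u}^{v-1}a_r$; your binomial cross-check via Theorem~\ref{thm:spectrum} is also correct.
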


\begin{proof}
Choose $X$ uniformly from the $2^n$ binary sequences of length $n$. For a
fixed pair $0\leq u<v\leq n$, telescoping the prefix sums gives
\[
P_u+P_v=\sum_{r=u}^{v-1}a_r\pmod2.
\]
This sum contains at least one bit because $u<v$. If all coordinates except
$a_u$ are fixed, exactly one of the two choices $a_u=0$ and $a_u=1$ makes
the sum equal to $1$. Thus $P_u\neq P_v$ for exactly $2^{n-1}$ generators,
or with probability $1/2$.

Let $\mathbf{1}_{u,v}(X)$ be the indicator of the event $P_u\neq P_v$.
Proposition~\ref{prop:prefix} gives
\[
|D(X)|=\sum_{0\leq u<v\leq n}\mathbf{1}_{u,v}(X).
\]
Taking expectations and using linearity of expectation, without requiring
the indicators to be independent, we obtain
\[
\mathbb{E}|D(X)|
=\sum_{0\leq u<v\leq n}\mathbb{E}\mathbf{1}_{u,v}
=\binom{n+1}{2}\frac12.
\]
This expectation is exactly the average over all $2^n$ generators.
\end{proof}

\BLU{\begin{remark}
Since the triangle has $\binom{n+1}{2}$ entries, the average weight is exactly
half the number of entries: Delannoy--Steinhaus triangles are balanced
\emph{on average}, in sharp contrast with the rarity of exactly balanced
triangles established in Corollary~\ref{cor:balanced}.
\end{remark}}

\section{Conclusion}

In the present work, we introduced Delannoy--Steinhaus triangles over $\mathbb{Z}/2\mathbb{Z}$ and investigated their weight distribution. The oddness of the Delannoy coefficients allowed us to express every entry as the parity of a consecutive interval of the generating sequence. We then used prefix parities to reduce the weight of the entire triangle to a quantity determined only by the composition of the associated prefix-parity sequence.

This representation provides a unified approach to the main weight parameters. It yields the complete spectrum of attainable weights, an enumeration of the generating sequences realizing each value, and an explicit formula for triangles generated by canonical basis vectors. It also gives direct proofs of the minimum nonzero and {second-smallest nonzero} weights, the maximum weight, and the average weight over all binary generating sequences.

As a consequence of the complete spectrum, we also solved the balance problem
for binary Delannoy--Steinhaus triangles. Such a triangle generated by a
sequence of length $n$ exists if and only if $n+1$ is a perfect square. In
that case, the two admissible prefix compositions characterize all balanced
generators, and their total number is given explicitly by
equation~\eqref{eq:balanced-count}.

Several directions for future research naturally arise. The reduction used here depends essentially on the parity of the Delannoy coefficients, and extending the study to other moduli will require a finer analysis of their residue patterns. It would also be interesting to investigate more closely the structure of the generating sequences within each weight class, to study additional constrained families of generators, and {to determine whether natural graph-theoretic structures and invariants can be associated with these triangles}.

\section*{Funding}
The authors declare that no funds, grants, or other support were received during the preparation of this manuscript.

\section*{Competing interests}
The authors have no relevant financial or non-financial interests to disclose.

\section*{Declaration on the use of generative AI}
All mathematical content of this article is the authors’ own work. Generative AI (Claude, Anthropic) was used solely for language editing and formatting, under the authors’ full review and responsibility.

{\footnotesize\bibliographystyle{abbrv}
\bibliography{bibd}}
\end{document}